\theoremstyle{plain}
\newtheorem{theorem}{Theorem}
\newtheorem{lemma}{Lemma}
\theoremstyle{definition}
\newtheorem{example}{Example}
\newtheorem{remark}{Remark}
\begin{document}

\title[primitive linear series]
{On the existence of primitive pencils for smooth curves}
\author{E. Ballico}
\address{Dept. of Mathematics\\
 University of Trento\\
38123 Povo (TN), Italy}
\email{ballico@science.unitn.it}
\thanks{The author was partially supported by MIUR and GNSAGA of INdAM (Italy).}
\subjclass[2010]{14H51}
\keywords{Brill-Noether theory; algebraic curve; primitive line bundle; linear series}

\begin{abstract}
Let $C$ be a smooth curve with gonality $k\ge 6$ and genus $g\ge 2k^2+5k-6$. We prove that $W^1_d({C})$ has the expected dimension
and that the general element of any irreducible component of $W^1_d({C})$ is primitive if either $g-k+4\le d\le g-2$ or $d=g-k+3$
and either $k$ is odd or $C$ is not a double covering of a curve of gonality $k/2$ and genus $k-3$. Even in the latter case we prove the existence of a complete and primitive
$g^1_{g-k+3}$.
\end{abstract}

\maketitle

A line bundle $L$ on a smooth curve $C$ of genus $g\ge 4$ is said to be {\emph{primitive}} if it is spanned and both $L$ and $\omega _C\otimes L^\vee$ are spanned,
i.e. if it is spanned and $h^0(L(q)) =h^0(L)$ for all $q\in C$ (sometimes one also imposes that $L\ne \mathcal {O} _C$ and $L\ne \omega _C$) (\cite{ckm}, \cite{ckm1}). Since $L$ is primitive
if and only if $\omega _C\otimes L^\vee$ is primitive, to study primitive line bundles on $C$ it is sufficient to know the ones with $0 < \deg (L) \le g-1$.
Let $C$ be a smooth curves of genus $g\ge 4$ with gonality $k$. If either $g\ne k(k-1)/2$ or $C$ is not isomorphic to a smooth plane curve, then
$C$ has a complete and primitive $g^1_k$. For very low $k$ or for a general smooth curve of genus $g$ the Brill-Noether theory of $C$ is well-known and it gives a complete description
of the complete and primitive $g^r_d$ on $C$ (\cite{ckm}, \cite{ckm1}, \cite{ckm2}, \cite{cm}). If $g\ge 11$ and $k\ge 5$, then a general element of any irreducible component of $W^1_{g-2}({C})$ is primitive
(\cite[Proposition II.0]{v}). In this note we consider the existence of complete and primitive $g^1_d$ for all $d$ near $g-1$ and prove the following result.

\begin{theorem}\label{or1}
Fix an integer $k\ge 6$ and set $g(k):=2k^2+5k-6$. Fix any integer $g\ge g(k)$, any smooth curve $C$ with gonality $k$
and any integer $d$ with $g-k+3 \le d\le g-2$.

\quad (a) $C$ has a complete and primitive $g^1_d$ and every irreducible component of $W^1_d({C})$ has dimension $2d-g-2$.

\quad (b) Assume that either $k$ is odd or $d>g-k+3$ or $d=g-k+3$, $k$ is even, but $C$ is not a double covering of a smooth curve of genus $k-3$ and gonality $k/2$.
Then a general element of every irreducible component of $W^1_d({C})$ is primitive.

\quad ({c}) Assume that $k$ is even and that $C$ is a double covering of a smooth curve of genus $k-3$ and
gonality $k/2$.

\quad (c1) There exist an irreducible component of $W^1_{g-k+3}({C})$ whose general member
is base point free and an irreducible component of $W^1_{g-k+3}({C})$ whose general member has $g-2k+3$ base points. 

\quad (c2) Let $\Gamma$ be any irreducible component of $W^1_{g-k+3}({C})$; if the general element of $\Gamma$ is base point free, then it is primitive.\end{theorem}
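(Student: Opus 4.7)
The plan rests on the characterization that $L\in W^1_d(C)$ (with $h^0(L)=2$) is primitive if and only if $|L|$ is base point free and $|\omega_C\otimes L^\vee|$ is base point free, together with the reformulation (via Riemann--Roch and Serre duality) that $p\in C$ is a base point of $|\omega_C\otimes L^\vee|$ precisely when $L+p\in W^2_{d+1}(C)$. Writing $\sigma\colon W^2_{d+1}(C)\times C\to\operatorname{Pic}^d(C)$, $(L',p)\mapsto L'-p$, the non-primitive locus inside any component $\Gamma\subseteq W^1_d(C)$ whose general member is base point free is contained in $\sigma(W^2_{d+1}(C)\times C)\cap\Gamma$. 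So the theorem reduces to (i) classifying the components of $W^1_d(C)$ and identifying which have a base point free general member, and (ii) bounding $\dim W^2_{d+1}(C)$ so that $\sigma$'s image cannot contain $\Gamma$.

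For (i), I apply the Coppens--Martens classification of components of Brill--Noether loci valid for a curve of gonality $k$ with $g\ge g(k)$: every irreducible component $\Gamma$ of $W^1_d(C)$ is either a base-point-added family $\{L_0(D):L_0\in V,\ D\ge 0,\ \deg D=d-k\}$ for some irreducible $V\subseteq W^1_k(C)$ (so $\dim\Gamma\le\dim V+d-k$), or has the expected dimension $2d-g-2$ with base point free general member. Under the gonality bound, $\dim W^1_k(C)\le 1$, with equality precisely when $C$ is a double cover $\pi\colon C\to C'$ of a curve $C'$ of genus $k-3$ and gonality $k/2$ (indeed $\pi^*$ embeds $W^1_{k/2}(C')$ into $W^1_k(C)$, and $\rho(k-3,1,k/2)=1$). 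In the non-double-cover situation every base-point-added component has $\dim\le d-k<2d-g-2$ in our range $d\ge g-k+3$, so it cannot be maximal and deforms to a base-point-free component. In the double-cover case at $d=g-k+3$, the base-point-added family has $\dim=1+(d-k)=2d-g-2$ and forms a second genuine component whose general member carries $g-2k+3$ base points. This gives the dimension assertion in (a) and the structural dichotomy between (b) and (c1).

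For (ii), I invoke the analogous Coppens--Martens bound $\dim W^2_{d+1}(C)\le\rho(g,2,d+1)=3d-2g-3$ under $g\ge g(k)$; this is where the explicit constant $2k^2+5k-6$ really enters. Then $\dim\sigma(W^2_{d+1}(C)\times C)\le 3d-2g-2<2d-g-2=\dim\Gamma$, the last inequality being $d<g$. Hence for every component $\Gamma$ whose general member is base point free, a general $L\in\Gamma$ has $|\omega_C\otimes L^\vee|$ base point free and is therefore primitive, completing the existence claim in (a), the primitivity in (b), and (c2). The main obstacle lies in the boundary case $d=g-k+3$ with $k$ even, where both dimension estimates become tight: one must verify in (i) that the two components in (c1) are genuinely distinct (the exotic component does not merge into the expected one, and conversely the expected one is not forced to acquire base points), and in (ii) that the base-point-free component is not entirely contained in $\sigma(W^2_{g-k+4}(C)\times C)$. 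Both require reading the double-cover structure of $C$ directly off the geometry of the components, and this is where the sharpness of $g(k)=2k^2+5k-6$ is essential.
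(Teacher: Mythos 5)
Your reduction of primitivity to the two base-point-freeness conditions, and the observation that $p$ is a base point of $|\omega_C\otimes L^\vee|$ iff $L(p)\in W^2_{d+1}(C)$ together with the incidence-variety dimension count for $\sigma$, is sound and is exactly how the paper frames the second half of the problem. The difficulty is that both pillars you then lean on are asserted rather than proved, and neither is available in the generality you need. (i) The ``classification'' of components of $W^1_d(C)$ (every component is either base-point-added from $W^1_k(C)$ or has expected dimension with base point free general member), and the claim that $\dim W^1_k(C)\le 1$ with equality \emph{precisely} when $C$ is a double cover of a curve of genus $k-3$ and gonality $k/2$, are statements about an \emph{arbitrary} curve of gonality $k$; the Coppens--Keem--Martens results you cite concern \emph{general} $k$-gonal curves. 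These statements are essentially the content of parts (a) and (b) of the theorem, so invoking them is circular. The paper instead derives them: if some component of $W^1_{g-x}(C)$ exceeds the expected dimension (or its general member has a base point), the H.~Martens/Horiuchi excess theorems force $\dim W^1_{2x}(C)$ (resp.\ $W^1_{2x+2}(C)$) to be large; the arithmetic-genus bound for curves in $\mathbb{P}^1\times\mathbb{P}^1$ then forces the resulting low-degree pencil and the gonality pencil to factor through a common covering $C\to C'$, and de Franchis plus Fulton--Harris--Lazarsfeld pin down $\beta=2$, $q=k-3$, $d=g-k+3$ as the only escape. (ii) The bound $\dim W^2_{d+1}(C)\le\rho(g,2,d+1)$ is likewise not a theorem for arbitrary $k$-gonal curves, and the paper never uses it: instead it takes the plane model $u:C\to\mathbb{P}^2$ given by $|R(p_R)|$, rules out a smooth plane model by gonality and a smooth double cover by Riemann--Hurwitz, and otherwise projects from a (singular) point to produce an oversized $W^1_{d-1}(C)$, feeding back into the machinery of (i).

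The second genuine gap is the exceptional case. You correctly identify that when $k$ is even, $d=g-k+3$ and $C$ is the relevant double cover, one must show the existence of a component whose general member is base point free (and complete), and you call this ``the main obstacle'' --- but you give no argument for it. This is not a routine verification: the paper needs the theorem of Ballico--Keem on multiple coverings (producing a complete, base point free $g^1_{g-k+3}$ \emph{not composed with} the double covering $\beta'$, using that $C'$ has dimensionally general Brill--Noether pencil loci) to establish (c1), and only then does the $W^2_{g-k+4}$ analysis yield (c2). As it stands, your proposal establishes neither the dimension statement in (a) nor the existence statements in (a) and (c1) without importing results that are themselves the substance of the theorem.
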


\begin{remark}\label{or2}
Fix an integer $x\ge 3$. Set $w:= \lfloor x/2\rfloor$, $z:= \lfloor (x-1)/2\rfloor$, $g_1(x):= 2x(2x-z)-4x+z+2$ and $g_2(x):= 4x^2-2wx+w$. Fix an integer $g\ge \max \{g_1(x),g_2(x)\}$.
Let $C$ be a smooth curve of genus $g$ and gonality at least $x+3$. The interested reader may reformulate an analogous of Theorem \ref{or1} with $d =g-x$
and prove it following verbatim the proof of Theorem \ref{or1}. In the case $g-d=4$ of Theorem \ref{or1} it is sufficient to assume that $g\ge 64$ (see Theorem \ref{s1.0}).
\end{remark}

\begin{remark}\label{or3}
In the exceptional cases of Theorem \ref{or1} we have a description of the irreducible components of $W^1_{g-k+3}({C})$ whose general
element has base points. We have
$\dim (W^1_k({C}))=1$ and each element of these components is obtained from some $R\in W^1_k({C})$ adding a base locus of degree $g-2k+3$. 
Let $Y$ be any smooth curve of genus $g$ and gonality $k$ with $\dim (W^1_k({Y}))=1$. If $W^1_{g-k+3}(Y)$ has pure dimension
$g-2k+4$, then it has at least one component formed by pencils with a base locus of degree $g-2k+3$. Steps (a) and (b) of the proof of Theorems \ref{or1}
show that if $g\gg k$, then $k$ is even and $Y$ is a double covering of a smooth curve of genus $k-3$ and gonality $k/2$.
\end{remark}

Many thanks are due to E. Sernesi for stimulating and interesting conversations.

\section{The proofs}\label{Sp}

\begin{lemma}\label{s1}
Fix integers $g, x$ such that $x\ge 2$ and $g\ge 4x+3$. Let $C$ be a smooth curve of genus $g$. Let $T$ be an irreducible component of $W^1_{g-x}({C})$.

\quad (a) If $\dim (T) > g-2x-2$, then $\dim (W^1_{2x}({C})) \ge x-1$.

\quad (b) If a general element of $T$ has a base point, then $\dim (W^1_{2x+2}({C})) \ge x$.
\end{lemma}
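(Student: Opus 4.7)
The plan is to pass via Serre duality to a large-dimensional family of $g^x$'s of degree $g+x-2$ and to invoke a Martens--Mumford--Keem-type bound to force $C$ to have low gonality; for (b), I would strip the base points from the generic element of $T$ and reduce to (a) after incrementing $x$.

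For (a), let $L\in T$ be general, so $h^0(L)=2$. Serre duality gives $M:=\omega_C\otimes L^\vee$ with $\deg M=g+x-2$ and, by Riemann--Roch, $h^0(M)=x+1$. The assignment $L\mapsto M$ identifies $T$ with a component $T'\subseteq W^x_{g+x-2}(C)$ of the same dimension, so $\dim T'\ge g-2x-1$. I would then invoke an upper bound of Keem/Coppens--Martens type on $\dim W^x_{g+x-2}(C)$ in terms of the gonality $k$ of $C$, sharp enough that $\dim T'\ge g-2x-1$ forces $k\le x+1$. Once that is known, taking a $g^1_k$ (with $k\le x+1$) and adding $2x-k$ free base points produces a component of $W^1_{2x}(C)$ of dimension $\ge 2x-k\ge x-1$, proving (a).

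For (b), let $b\ge 1$ denote the generic degree of the base locus of members of $T$, and write $L=L_0\otimes\mathcal{O}(B_L)$ with $L_0$ base point free of degree $g-x-b$ and $B_L$ the base locus. The morphism
\[
L\longmapsto (L_0,B_L)\in W^1_{g-x-b}(C)\times\mathrm{Sym}^b(C)
\]
is injective on a dense open subset of $T$, so an irreducible component $T_1\subseteq W^1_{g-x-b}(C)$ (whose general element is base point free) satisfies $\dim T_1\ge\dim T-b\ge g-2x-2-b$. Since the Brill--Noether bound for $W^1_{g-(x+b)}(C)$ is $g-2(x+b)-2$, the component $T_1$ strictly exceeds the expected dimension. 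Applying (a) with $x$ replaced by $x+b$ (legal when $g\ge 4(x+b)+3$, which holds in the regime of the applications of this lemma in Theorem \ref{or1}) yields $\dim W^1_{2(x+b)}(C)\ge(x+b)-1\ge x$. Setting $b=1$ gives the stated bound $\dim W^1_{2x+2}(C)\ge x$.

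The crux of the argument is the step in (a) that bounds $\dim W^x_{g+x-2}(C)$ tightly enough to force $k\le x+1$. Plain Martens and Mumford give only $\dim W^x_{g+x-2}(C)\le g-x-2$ and $\le g-x-3$ respectively, which are compatible with $\dim T'=g-2x-1$ for every $x\ge 2$ and yield no information about gonality. One must therefore use a refined bound accounting for gonality --- of the type proved by Keem or Coppens--Martens --- which is where the technical weight of the argument lies.
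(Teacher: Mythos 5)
Your part (a) is not a proof: the step you yourself call the crux --- an upper bound on $\dim W^x_{g+x-2}(C)$ (equivalently, after undoing your residuation, on $\dim W^1_{g-x}(C)$) ``sharp enough that $\dim T'\ge g-2x-1$ forces $k\le x+1$'' --- is left entirely uncited and unproved, and in fact no such bound exists, because the implication you want is false. Already for $x=2$, let $\pi:C\to E$ be a bielliptic curve of genus $g\ge 11$: the line bundles $\pi^\ast (A)(D)$ with $A\in \mathrm{Pic}^2(E)$ and $D\in \mathrm{Sym}^{g-6}(C)$ fill up a component of $W^1_{g-2}(C)$ of dimension at least $g-5>g-6=g-2x-2$, while the gonality of $C$ is $4>x+1$ (Castelnuovo--Severi); similar double-cover constructions give excess components of $W^1_{g-x}$ with $k>x+1$ for larger $x$ as well. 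This is exactly why the result the paper invokes (H.~Martens \cite{m}, Horiuchi \cite[Theorem 1]{h}), applied directly to $W^1_{g-x}$ with $d=g-x$, $j=x-1$ (so $d\le g-1-j$), is formulated as a dimension statement --- it yields $\dim (W^1_{2j+2}(C))\ge j$, i.e.\ precisely $\dim (W^1_{2x}(C))\ge x-1$ --- and not as a gonality statement: the excess low-degree pencils may be composed with a covering of a positive-genus curve, as in the bielliptic example, so your plan of ``get $k\le x+1$, then add $2x-k$ base points'' cannot be carried out. The Serre-duality detour through $W^x_{g+x-2}$ buys nothing here; the paper's proof of (a) is just the citation of \cite{m} or \cite[Theorem 1]{h}.

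Part (b) has a second, independent gap. Stripping the entire base locus of degree $b$ and applying (a) with $x$ replaced by $x+b$ gives (at best) $\dim (W^1_{2x+2b}(C))\ge x+b-1$, a statement about degree $2x+2b$, not about $W^1_{2x+2}(C)$; you cannot ``set $b=1$'', since $b$ is dictated by $T$, and there is no trivial way to pass from a big $W^1_{2x+2b}$ to a big $W^1_{2x+2}$. The correct (and the paper's) move is to subtract a \emph{single} base point: $R\mapsto R(-p_R)$ has fibres of dimension at most one, so one gets a component of $W^1_{g-x-1}(C)$ of dimension at least $g-2x-3>g-2(x+1)-2$, and then (a) with $x+1$ gives exactly $\dim (W^1_{2x+2}(C))\ge x$; this also removes your need for the extraneous hypothesis $g\ge 4(x+b)+3$, which the lemma as stated does not grant and which you only justify by appealing to the regime of Theorem \ref{or1}.
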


\begin{proof}
Since $g \ge 2x+2$, Brill-Noether theory gives $W^1_{g-x}({C})\ne \emptyset$ and that each irreducible component of $W^1_{g-x}({C})$ has dimension at least $g-2x-2$ (\cite[Ch. IV]{acgh}).

First assume $\dim (T)\ge g-2x-1$. Set $d: =g-x$ and $j=x-1$. We have $g-2x-1 =d-2-j$, $d \ge j+2$ and $d \le g-1-j$ (the
latter inequality is an equality). By \cite{m} or \cite[Theorem 1]{h} we have $\dim (W^1_{2x}({C})) \ge x-1$. 

Now assume that a general element of $T$ has at least one base point. We get an irreducible component of $W^1_{g-x-1}({C})$ with dimension at least $g-2x-3$. Apply part (a) with the integer $x':= x+1$ instead of the integer $x$.
\end{proof}

\begin{proof}[Proof of Theorem \ref{or1}:]
Since $2d-g-2\ge 0$, Brill-Noether theory says that $W^1_d({C}) \ne \emptyset$ and that each irreducible component of $W^1_d({C})$ has dimension at least $2d-g-2$. Let $T$ be an irreducible component of $W^1_d({C})$ and let $R$ be a general element of $T$. As in the case of the general member of any irreducible component of any
$W^1_y({C})$ with $y\le g-1$ we have $h^0({R}) =2$. To prove Theorem \ref{or1} it is sufficient to prove that $\dim (T)=2d-g-2$, that $R$ is base point free and that $h^0(R({p}))=1$ for all $p\in C$. Let $f: X\to \mathbb {P}^1$ be any degree $k$ morphism.

\quad (a) Assume $\dim (T) > 2d-g-2$. The case $x=g-d$ of part (a) of Lemma \ref{s1} gives $\dim (W^1_{2g-2d}({C})) \ge g-d-1$. Let $\Gamma$ be any irreducible component of $W^1_{2g-2d}({C})$ with $\dim (\Gamma)
\ge g-d-1$. Let $R'$ be a general element of $\Gamma $. Since $R'$ is general in an irreducible component of some $W^1_y({C})$, $y\le g+1$, we have $h^0(R') =2$.
Let $s\ge 0$ be the degree of the base locus $B$
of $R'$. Varying $R'$ we get an irreducible family $\Gamma '\subseteq W^1_{2g-2d-s}({C})$ with $\dim (\Gamma ') \ge g-d-1$. Set $R'':= R'(-B)$. Let $u: C\to \mathbb {P}^1$ be the morphism associated
to $|R''|$ and let $\alpha : C\to \mathbb {P}^1\times \mathbb {P}^1$ be the morphism associated to $(f,u)$. If $\alpha$ is birational onto its image,
then $g\le k(2g-2d-s) -k-(2g-2d-s) +1 \le k(2k+6)-k-(2k+6)+1 <g(k)$, a contradiction. 
Hence $C$ is not birational onto its image, i.e. calling $C'$ the normalization of $\alpha ({C})$ we get a morphism $\beta ': C\to C'$ with $\beta : =\deg (\beta ')\ge 2$ and morphisms $f': C'\to \mathbb {P}^1$ and $u': C'\to \mathbb {P}^1$ such that $f = f'\circ \beta '$ and $u = u'\circ \beta '$. Since $f$ computes the gonality of $C$, we get that $C'$ has genus $q_{R'}>0$ and
that $C'$ has gonality $k/\beta$.

First assume $q_{R'} \ge 2$. Since $\Gamma$ is irreducible and $C$ has only finitely many non-constant morphisms to curves of genus between $2$ and $g-1$ by a theorem of de Franchis, we get that
$C'$, $\beta $ and $\beta '$ do not depend from the choice of $R'$. Since $h^0(R'')=2$, we have $h^0(C,L'')=2$, where $L''$ is the line bundle on $C'$ with $\beta ^{'\ast }(L'')\cong R''$.
Since $h^0(R'')=2$, we have $h^0(L'') =2$ and hence $(2g-2d-s)/\beta \le q+1$.
By \cite[Theorem 1]{fhl} we have $W^1_{(2g-2d-s)-\beta \lfloor (g-d-1)/2\rfloor}({C}) \ne \emptyset$
and hence $k\le (2g-2d-s) -\beta \lfloor (g-d-1)/2\rfloor \le (2g-2) -\beta \lfloor (g-d-1)/2\rfloor$. Since $\beta \ge 2$ and $g-d\le k-3$, we get $k \le 2g-2d -g+d+2
\le 2k-6-k+5$, a contradiction.

Now assume $q_{R'}=1$. Since $C'$ has gonality $k/\beta$, we get $\beta =k/2$. Hence $2g-2d-s$ is divisible by $k/2$. Since $d\le g-2$, we have $\dim (W^1_{2g-2d-s}({C})) \ge 3$ and hence $2g-2d-s >k$. Therefore $2g-2d-s \ge 3k/2$. Since $q_{R'}=1$, we have $h^0(C',L'') = \deg (L'') \ge 3 > h^0(R'')$, a contradiction.

\quad (b) In this step we prove that $R$ has no base points if one of the conditions in part (b) of the statement of Theorem \ref{or1} is satisfied. If $R$ has a base point, then part (b) of Lemma \ref{s1} with $x=g-d$
gives $\dim (W^1_{2g-2d+2}({C})) \ge g-d$.  Let $\Gamma _1$ be any irreducible component of $W^1_{2g-2d+2}({C})$ with $\dim (\Gamma _1)
\ge g-d$. Let $R'$ be a general element of $\Gamma _1$. Since $R'$ is general in an irreducible component of some $W^1_y({C})$, $y\le g+1$, we have $h^0(R') =2$.
Let $s\ge 0$ be the degree of the base locus $B$
of $R'$. Varying $R'':= R'(-B)$ we get an irreducible family $\Gamma  '\subseteq W^1_{2g-2d+2-s}({C})$ with $\dim (\Gamma ') \ge g-d$ and with $R''$
as its general member. Let $u: C\to \mathbb {P}^1$ be the morphism associated
to $|R''|$ and let $\alpha : C\to \mathbb {P}^1\times \mathbb {P}^1$ be the morphism associated to $(f,u)$. If $\alpha$ is birational onto its image,
then $g\le k(2g-2d+2-s) -k-(2g-2d+2-s) +1 \le k(2k+8)-k-(2k+8)+1 =g(k)-1$, a contradiction. 
Hence $C$ is not birational onto its image, i.e. calling $C'$ the normalization of $\alpha ({C})$ we get a morphism $\beta ': C\to C'$ with $\beta : =\deg (\beta ')\ge 2$ and morphisms $f': C'\to \mathbb {P}^1$ and $u': C'\to \mathbb {P}^1$ such that $f = f'\circ \beta '$ and $u = u'\circ \beta '$. Since $f$ computes the gonality of $C$, we get that $C'$ has genus $q_{R'}>0$ and
that $C'$ has gonality $k/\beta$. 

First assume $q_{R'} \ge 2$. Since $\Gamma$ is irreducible and $C$ has only finitely many non-constant morphisms to curves of genus between $2$ and $g-1$ by a theorem of de Franchis, we get that
$C'$, $\beta '$ and $\beta$ does not depend from the choice of $R'$. Since $h^0(R'')=2$, we have $h^0(C,L'')=2$, where $L''$ is the line bundle on $C'$ with $\beta ^{'\ast }(L'')\cong R''$.
Since $h^0(R'')=2$, we have $h^0(L'') =2$ and hence $(2g-2d+2-s)/\beta \le q+1$.
By \cite[Theorem 1]{fhl} we have $W^1_{2g-2d+2-2s-\deg (B_1)-\beta \lfloor (g-d)/2\rfloor}({C}) \ne \emptyset$
and hence 
$k\le (2g-2d+2-s) -\beta \lfloor (g-d)/2\rfloor$. Since $2 \le g-d \le k-3$, we get $k=g-d+3$, $\beta =2$, $s=0$, $g-d$ odd and $q\ge k-3$. We also get that
$C'$ has gonality $k/2$. Since $\dim (W^1_{k-2}(C')) \ge \dim (\Gamma _1) \ge k-3 \ge q$, we get $q=k-3$. We are in the exceptional case allowed in the
statement of Theorem \ref{or1}.

Now assume $q_{R'}=1$. Since $C'$ has gonality $k/\beta$, we get $\beta =k/2$. Hence $2g-2d-s$ is divisible by $k/2$. Since $d\le g-2$, we have $\dim (W^1_{2g-2d-s}({C})) \ge 3$ and hence $2g-2d-s >k$. Therefore $2g-2d-s=3k/2$. Since $q_{R'}=1$, we have $h^0(C',L'') = \deg (L'') = 3 > h^0(R'')$, a contradiction.

\quad ({c}) Assume the existence of $p_R\in C$ such that  $h^0(R(p_R)) =3$. Since $R$ is base point free and $h^0({R}) =2$, $M:= R(p_R)$ is base point free. Let $u: C\to \mathbb {P}^2$ be the morphism induced by $|M|$. Since $R$ is general in $T$, we get $\dim (W^2_{d+1}({C})) \ge 2d-g-3$ and $\dim (W^2_{d+1}({C})) \ge 2d-g-2$, unless the
same general $M$ comes from infinitely many pairs $(R_1,P_{R_1})$ with $R_1$ general in $T$.
 Since a smooth plane curve of degree $d+1$ has gonality $d>k$,
either $\deg (u) >1$ or $u({C})$ is a singular curve. First assume $\deg (u)=1$. Taking the a linear projection from one of the finitely many singular points of $u({C})$ we get $\dim (W^1_{d-1}({C})) \ge 2d-g-3$. The case $x_1:= x+1$ of part (a) of Lemma \ref{s1} gives $\dim (W^1_{2g-2d+2}({C})) \ge g-d$.
We are in the exceptional case described in step (b). Now assume $\deg (u)>1$. If  $\dim (W^2_{d+1}({C})) \ge 2d-g-2$, we get the same lower bound for $\dim (W^1_{d-1}({C}))$
taking a linear projection from any point of $u({C})$. Taking a linear projection we get a better estimate if either $\dim (W^2_{d+1}({C})) \le 2d-g-3$ or $\deg (u)\ge 3$
or $u({C})$ is singular. Now assume $\deg (u) =2$ and that $u({C})$ is smooth. Since $\deg (u({C})) =(d+1)/2 \ge (g-k+34)/2$,
we get that $u({C})$ has genus $q' \ge  (g-k+3)(g-k+2)/8$. Since $g \ge 2q'-1$ (Riemann-Hurwitz), we get a contradiction.

\quad (d) To conclude the proof we may assume that $k$ is even and the existence of a degree $2$ covering $\beta ':C\to C'$ with
$C'$ smooth of genus $k-3$ and gonality $k/2$.  By step (a) $W^1_{g-k+3}({C})$ has pure dimension $g-2k+4$. Remark \ref{or3} gives the existence of an irreducible component
of $W^1_{g-k+3}({C})$ whose general member has $g-2k+3$ base points. Brill-Noether theory gives  $\dim (W^1_t(C'))\ge 2t-k+1$ for all $t\in \mathbb {N}$
such that $k/2 \le t \le k-2$. Since $C'$ has gonality $k/2$, \cite[Theorem 1]{fhl} first gives
$\dim (W^1_{k/2}(C'))=1$ and then $\dim (W^1_t(C'))\le 2t-k+1$ for all $t\in \mathbb {N}$
such that $k/2 < t \le k-2$. Hence $C'$ has dimensionally the Brill-Noether theory for pencils of a general curve of genus $k-3$. This is enough
to carry over the proof of \cite[Theorem 0.1]{bk} (see the proofs of Lemmas 1.2, 1.3 and Theorem 0.1 in \cite{bk}).
Hence (with this observation concerning $C'$), \cite[Theorem 0.1]{bk} gives the existence of a degree $g-k+3$ morphism $f: C\to \mathbb {P}^1$ not composed with $\beta '$,
i.e.
such that the morphism $(\beta ',f): C\to \mathbb {P}^1\times \mathbb {P}^1$ is birational onto its image. We claim that we may take as $f$ a complete pencil.
This claim is true by \cite[Lemma 1.3]{bk}, which describes all the $W^1_{g-k+3}({C})$, $k-3 =p_a(C')$, whose general element has base points
and the existence of at least another components of $W^1_{g-k+3}({C})$ (\cite[first line of page 155]{bk}; it is the second line of page 155, which loses the completeness statement in
\cite[Theorem 0.1]{bk}). So we proved the existence of an irreducible component of $W^1_{g-k+3}({C})$ containing a base point free and complete $g^1_{g-k+3}$.

Let $\Gamma$ be any irreducible component of $W^1_{g-k+3}({C})$ containing a base point free and complete $g^1_{g-k+3}$, $\delta$.
Since $\delta$ is complete, the general element of $\Gamma$ is complete and base point free. Let $R$ be a general element
of $\Gamma$. Assume the existence of $p_R\in C$ such that  $h^0(R(p_R)) =3$. Since $R$ is base point free and $h^0({R}) =2$, $M:= R(p_R)$ is base point free.
Since $R$ is general in $\Gamma$, we get $\dim (W^2_{g-k+4}({C})) \ge g-2k+3$
and $\dim (W^2_{g-k+4}({C})) \ge g-2k+4$, unless a general $M$ comes from infinitely many pairs $(R,p_R)$ .
 Let $u: C\to \mathbb {P}^2$ be the morphism induced by $|M|$. First assume
$\deg (u) =1$. Since $g-k+4 >k+1$
and a smooth plane curve of degree $g-k+4$ has gonality $g-k+3>k$,
$u({C})$ is a singular curve. Therefore taking a linear projection from a singular point of $u({C})$ we obtain $\dim (W^1_{g-k+2}({C})) \ge g-2k+3$ (because
$u({C})$ has only finitely many singular points). Write $\dim (W^1_{g-k+2}({C})) = g-k-1-j+e$ with $e\ge 0$ and $j =k-4$.
By \cite[Theorem 1]{h} we have $\dim (W^1_{2k-6-2e}({C})) = k-1-e$ and (hence, even if $e>0$ by \cite[Theorem 1]{fhl}) we have $\dim (W^1_{2k-6}({C})) \ge k-4$. This
is the case handled in step (a). Now assume $\deg (u) >1$ and call $C''$ the normalization of $u({C})$, $q$ its genus, and $v: C\to C''$
the morphism through which $u$ factors. Write $M = v^\ast (L)$ with $L$ base point free line bundle on $C''$ with $h^0(C,L) =3$. If $\deg (u)\ge 3$, then fixing $o\in u({C})_{\mathrm{reg}}$ and taking the linear
projection from $o$
we get $\dim (W^1_{g-k+1}({C})) \ge g-2k+3$ (the same for all $M$, because $q\ge 2$ and we may apply a theorem of de Franchis). Write $\dim (W^1_{g-k+1}({C})) = g-k-1-j+e'$ with $e'\ge 0$ and $j=k-4$. We get $\dim (W^1_{2k-8}({C})) \ge k-3$
and conclude. Now assume $\deg (u) =2$. In this case $(g-k+4)/2\in \mathbb {Z}$. If $u({C})$ is singular, then a linear projection from one of its singular points
gives $\dim (W^1_{g-k}({C}))\ge g-2k+3$ and so  $\dim (W^1_{g-k+3}({C}))\ge g-2k+6$, contradicting step (a).
Hence $u({C})$ is smooth and so it has genus $q':= (g-k+2)(g-k)/8$. Riemann-Hurwitz gives $g\ge 2q'-1$, a contradiction.\end{proof}

\begin{example}\label{or4}
Fix an even integer $k\ge 6$ and set $x:= k-3$. Let $C'$ be any smooth curve of genus $x$ and gonality $k/2$. We have $\dim (W^1_{k/2}(C')) =1$.
Let $u: C\to C'$ be a degree $2$ covering of genus $g \ge 3x+4$. $W^1_{g-x}({C})$ contains the $(g-2x-2)$-dimensional family of $g^1_{g-x}$ obtained from
the pull-backs of the elements of $W^1_{k/2}(C')$ adding $g-2x-3$ base points. Since $g > 2x+k$, any base point free pencil on $C$ of degree $ < k$
is the pull-back of a base point free pencil on $C'$ by the Castelnuovo-Severi inequality (\cite{k}). Hence $C$ has gonality $k$. Hence the exceptional cases
in Theorem \ref{or1} arises. \end{example}

See also \cite{bkp}  (resp. \cite{bks}) for the existence of spanned pencils on curves which are double (resp. multiple) coverings. By \cite[Theorem 2.2]{bk2} for every integer
$d\ge g-k+2$ every $k$-gonal
curves of genus $g > (3k - 6)(k - 1)$ has a base point free $g^1_d$.

\begin{theorem}\label{s1.0}
Let $C$ be a smooth curve of genus $g\ge 64$ with gonality $k\ge 7$. Then $C$ has a primitive $g^1_{g-4}$, $W^1_{g-4}({C})$ has pure dimension $g-10$ and the general element of every irreducible component of $W^1_{g-4}({C})$ is primitive.
\end{theorem}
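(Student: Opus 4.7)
The plan is to adapt the three-part proof of Theorem \ref{or1} to the specialization $x := g - d = 4$, trading the heavy threshold $g(k) = 2k^2 + 5k - 6$ for the uniform bound $g \ge 64$ by sharpening the numerical comparisons in each step and bringing in one additional dimension estimate for $W^1_m(C)$. Throughout I fix a degree-$k$ morphism $f : C \to \mathbb{P}^1$ and, for any problematic pencil $R$, form the product $\alpha := (f, u) : C \to \mathbb{P}^1 \times \mathbb{P}^1$ with $u$ associated to $|R(-B)|$ ($B$ the base divisor of $R$). For part (a)---pure dimensionality of $W^1_{g-4}(C)$---suppose some component $T$ has $\dim T > g - 10$. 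Lemma \ref{s1}(a) with $x = 4$ gives $\dim W^1_8(C) \ge 3$, and since $W^1_m(C) = \emptyset$ for $m < k$ we are reduced to $k \in \{7, 8\}$. If $\alpha$ is birational onto its image, the arithmetic-genus bound yields $g \le (k-1)(7-s) \le 49 < 64$, a contradiction. Otherwise $\alpha$ factors through $\beta' : C \to C'$ with $\beta := \deg \beta' \ge 2$ and $\beta \mid k$; applying \cite[Theorem 1]{fhl} produces $W^1_{8-s-\beta}(C) \ne \emptyset$, so $k \le 8 - s - \beta \le 6$, contradicting $k \ge 7$, while the residual elliptic subcase $q_{R'} = 1$, $\beta = k/2$, is eliminated because $\dim W^1_2(C') = 1 < 3 \le \dim \Gamma$ on an elliptic $C'$.

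For part (b)---base-point-freeness of a general $R \in T$---suppose a general $R$ has a base point. Lemma \ref{s1}(b) with $x = 4$ now gives $\dim W^1_{10}(C) \ge 4$, so $k \in \{7, 8, 9, 10\}$. For $k \in \{7, 8\}$ the birational bound $g \le (k-1)(9-s) \le 63 < 64$ already closes the case; the value $63$ is precisely why the theorem is calibrated at $g \ge 64$. For $k \in \{9, 10\}$ the bidegree bound alone gives only $g \le 72$ or $g \le 81$, so I would supplement it with a Coppens--Martens-type estimate: on any $k$-gonal curve of genus large enough to force $\dim W^1_k(C) = 0$ (which holds well below $64$ for $k \in \{9, 10\}$), every component of $W^1_m(C)$ with $k \le m \le 2k - 3$ has dimension at most $m - k$; taking $m = 10$ yields $\dim W^1_{10}(C) \le 1$ or $\le 0$, incompatible with $\dim W^1_{10}(C) \ge 4$. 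The non-birational factorization subcase is handled just as in Theorem \ref{or1}(b): FHL produces $W^1_{10-s-2\beta}(C) \ne \emptyset$, hence $2\beta + s \le 10 - k \le 3$, incompatible with $\beta \ge 2$; and the elliptic subcase $q_{R'} = 1$, $\beta = k/2$ (so $k \in \{8, 10\}$), is again ruled out by $\dim W^1_2(C') = 1 < 4$. Because $g - d = 4$ is even, the exceptional double-covering configuration of Theorem \ref{or1}(c) never arises here.

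For part (c)---no $p \in C$ with $h^0(R(p)) = 3$---I would follow Step (c) of the proof of Theorem \ref{or1} with $d = g - 4$. If such $p$ exists, set $M := R(p)$ and let $u : C \to \mathbb{P}^2$ be the morphism induced by $|M|$; generically $\dim W^2_{g-3}(C) \ge g - 11$. For $\deg u = 1$, or $\deg u \ge 3$, or $u(C)$ singular, a linear projection from a (singular) point of $u(C)$ gives $\dim W^1_{g-5}(C) \ge g - 11$, and Lemma \ref{s1}(a) with $x = 5$ then produces $\dim W^1_{10}(C) \ge 4$, which contradicts the analysis carried out in part (b). The remaining case $\deg u = 2$ with $u(C)$ smooth of degree $(g-3)/2$ has image of genus of order $g^2$, contradicting the Riemann--Hurwitz inequality $g \ge 2q' - 1$ once $g \ge 64$. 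The main obstacle is the pair $k \in \{9, 10\}$ in part (b), where the straightforward product-map bound is insufficient under the weak hypothesis $g \ge 64$; its resolution relies on a Coppens--Martens dimension estimate for pencils on $k$-gonal curves, which supplies exactly the structural input that the much heavier threshold $g(k)$ silently provides in Theorem \ref{or1}.
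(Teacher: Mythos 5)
Your overall architecture matches the paper's: reduce everything via Lemma \ref{s1} to the statements $\dim W^1_8(C)\ge 3$ (excess dimension) and $\dim W^1_{10}(C)\ge 4$ (base points, and again in the $h^0(R(p))=3$ step after a linear projection), and then kill those. For $k\in\{7,8\}$ your route differs from the paper's (which uses the primality of $7$, the coprimality of $8$ and $9$, and an explicit bidegree $(4,5)$ analysis), but your specialization of the Theorem \ref{or1} machinery works: the worst birational bidegree is $(8,10)$ with arithmetic genus $63<64$, and the non-birational branch dies on $2\beta+s\le 10-k$ together with $\beta\ge 2$, with the elliptic image disposed of by $\dim W^1_2(C')\le 1$.

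The genuine problem is your treatment of $k\in\{9,10\}$. The ``Coppens--Martens-type estimate'' you invoke --- that on a $k$-gonal curve with $\dim W^1_k(C)=0$ every component of $W^1_m(C)$ with $k\le m\le 2k-3$ has dimension at most $m-k$ --- is false as a general statement: a general curve of even genus $g$ has gonality $k=(g+2)/2$, satisfies $\dim W^1_k(C)=0$, and has $\dim W^1_{k+1}(C)=\rho(g,1,k+1)=2>1=m-k$. (Your side condition ``genus large enough to force $\dim W^1_k(C)=0$'' is also not something large genus forces; double coverings give $k$-gonal curves of arbitrarily large genus with $\dim W^1_k(C)=1$.) So as written this step rests on an unproved and in general untrue lemma. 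Fortunately it is also unnecessary: the Fulton--Harris--Lazarsfeld theorem you already use elsewhere gives, for a curve of gonality $k$, that $\dim W^1_m(C)\ge w$ forces $W^1_{m-\lfloor w/2\rfloor}(C)\ne\emptyset$, hence $\dim W^1_m(C)\le 2(m-k)+1$; with $m=10$ and $k\ge 9$ this yields $\dim W^1_{10}(C)\le 3$, contradicting $\dim W^1_{10}(C)\ge 4$ outright (and likewise $\dim W^1_8(C)\ge 3$ forces $k\le 7$ in your step (a)). Replace the invented estimate by this one-line FHL argument and your proof closes; the rest (the linear-projection step and the smooth conic-image case via Riemann--Hurwitz) is sound and parallels the paper.
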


\begin{proof}
Since $2(g-4) -g -2 \ge 0$, Brill-Noether theory gives $W^1_{g-4}({C}) \ne \emptyset$ and that any irreducible component $T$ of $W^1_{g-4}({C})$ has dimension at least $2(g-4)-g-2 =g-10$.  Fix a general $R\in T$. As in the case of any irreducible component of any $W^r_d({C})$ we have $\dim |R| =1$. To prove Theorem \ref{s1.0}
it is sufficient to prove that $R$ is base point free, that $h^0(R(q)) = 2$ for every $q\in C$, and that $\dim T =g-10$.

\quad (a) In this step we prove that $R$ has no base points. If $R$ has a base point, then the case $x=4$ of part (b) of Lemma \ref{s1}
gives $\dim (W^1_{10}({C})) \ge 4$.

\quad (a1) Assume $k=7$. Let $h: C\to \mathbb {P}^1$ be any degree $7$ morphism. Since $7$ is a prime number and $g>7\cdot 7 -7-7+1$, the genus formula for integral
curves contained in $\mathbb {P}^1\times \mathbb {P}^1$ shows that $h$ is unique. Let $m$ be the first integer $>7$ such that $C$ has a base point free $g^1_m$.
Since  $\dim (W^1_{10}({C}))> 10-7$ and $\dim (W^1_7({C})) =0$, we have $m\le 10$. Every integral curve of $\mathbb {P}^1\times \mathbb {P}^1$ with bidegree $(7,m)$
has arithmetic genus $7m-7-m+1 \le 70-7-m+1 <g$, a contradiction.

\quad (a2) Assume $k=8$. First assume that $C$ has only finitely many $g^1_8$. Let $h: C\to \mathbb {P}^1$ be any degree $8$ morphism. Since $\dim (W^1_{9}({C})) \ge 2$, we get the existence
of a degree $9$ morphism $f: C \to \mathbb {P}^1$. Since $8$ and $9$ are coprime,  the map $(h,f): C\to \mathbb {P}^1\times \mathbb {P}^1$ is birational onto its image, $\Gamma$. Since
$p_a(\Gamma ) =8\cdot 9-8-9+1$, we get $g\le 56$, a contradiction. Now assume $\dim (W^1_8({C})) \ge 1$ and hence $\dim (W^1_8({C}))=1$. Since $g > 8\cdot 9-8-9+1$, the proof of step (a1) shows that every element of $W^1_9({C})$ has one base point. Since $\dim (W^1_{10}({C})) > 3 + \dim (W^1_8({C}))$, there
is a degree $10$ morphism $\ell : C\to \mathbb {P}^1$.
Since $g > 8\cdot 10 -8-10+1$, we get that $(h,\ell ): C\to \mathbb {P}^1\times \mathbb {P}^1$ has degree $2$ onto its image $\Phi _{(h,\ell )}$ and $\Phi _{(h,\ell )}$ is an integral curve
of bidegree $(4,5)$. Since $k>4$, the normalization $D_{(h,\ell )}$ of $\Phi _{(h,\ell )}$ has genus $>1$. Hence varying $u$ and $\ell$ we only get finitely many smooth curves
$D_{(h,\ell )}$.
We fix one such normalization $D = D_{(h,\ell )}$ and call $v: C\to D$ the map induced by $(u,\ell )$ and $q$ the genus of $D$. We have $q\le 4\cdot 5-4-5+1 =12$ by the Castelnuovo - Severi inequality.
Since $g>2q+9$ all base point free pencils of degree $8$ (resp. $10$)
pencils on $C$ are induced by a degree $4$ (resp. $5$) pencil on $D$ (again by the Castelnuovo - Severi inequality). Since $\dim (W^1_{10}({C})) \ge 4$, we get
$\dim (W^1_5({D})) \ge 4$. Hence $W^1_3({C}) \ne \emptyset$ (\cite{fhl}). Therefore $k\le 6$, a contradiction.

\quad (b) Assume $\dim (T) >g-10$. The case $x=4$ of part (a) of Lemma \ref{s1} gives $\dim (W^1_8({C})) \ge 3$. Hence $k\le 7$ and if $k=7$, then $\dim (W^1_7({C}))=1$.
 Assume $k=7$. Since $7$ is a prime number and $C$ has at least two $g^1_7$, the Castelnuovo - Severi inequality gives $g\le 7\cdot 7-7-7+1$, a contradiction.

\quad ({c}) Assume the existence of $q_R\in C$ such that  $h^0(R(q_R)) =3$. Since $R$ is general in $T$, we get $\dim (W^2_{g-3}({C})) \ge g-11$.
Since $R$ is base point free and $h^0({R}) =2$, $M:= R(q_R)$ is base point free. Let $u: C\to \mathbb {P}^2$ be the morphism induced by $|M|$. Since
$g < (g-4)(g-5)/2$ either $\deg (u) >1$ or $u({C})$ is a singular curve. Therefore taking a linear projection from a point of $u({C})$ (case $\deg (u)>1$)
or a singular point of $u({C})$ (case $\deg (u) =1$), we obtain $\dim (W^1_{g-5}({C})) \ge g-11$. The case $x=5$ of part (a) of Lemma \ref{s1} gives $\dim (W^1_{10}({C})) \ge 4$.
We are in the case excluded in step (a).
\end{proof}

\begin{remark}
Let $C$ be a smooth curve of genus $g$ with a primitive $R\in \mathrm{Pic}^d({C})$, $d\le g-2$. Since $\omega _C\otimes R^\vee$ is primitive, $C$ has a primitive $g^{g-d}_{2g-2-d}$.
In the case $k\ge 5$ and $d=g-2$ for a general $R$ the dual linear series $\omega _C\otimes R^\vee$ is birational onto it image and with image a plane nodal curve
(\cite[Proposition II.0]{v}). See \cite{m1}, \cite{mp}, \cite{ckm2} for
the very ampleness of some $g^3_{g+1}$ (case $d=g-3$).
\end{remark}

\providecommand{\bysame}{\leavevmode\hbox to3em{\hrulefill}\thinspace}


\begin{thebibliography}{99}

\bibitem{acgh} E.~Arbarello, M.~Cornalba, P.~A.~Griffiths and J.~Harris,
Geometry of algebraic curves. Vol. I, Springer, Berlin, 1985.


\bibitem{bk} E. Ballico and C. Keem, On multiple coverings of irrational curves, Arch. Math. 61 (1995), 151--160.

\bibitem{bk2} E. Ballico and C. Keem, Variety of linear systems on double coverings of curves, J. Pure Appl. Algebra 128 (1998), 213--224.

\bibitem{bkp} E. Ballico, C. Keem and S. Park, Double coverings of curves, Proc. Amer. Math. Soc. 132 (2004), no. 11, 3153--3158.


\bibitem{bks} E. Ballico, C. Keem and D. Shin, Pencils on coverings of a given curve whose degree is larger than the Castelnuovo-Severi lower bound,
Bull. Inst. Math. Acad. Sin. (N.S.) 2 (2007), no. 1, 103--107.


\bibitem{bk+} E. Ballico and C. Keem, Clifford index of smooth algebraic curves of odd gonality with big $W^r_d({C})$, Osaka J. Math.
39 (2002), 283--292.


\bibitem{ckm} M. Coppens, C. Keem and G. Martens, Primitive linear series on curves, Manuscripta Math. 77 (1992), 237--264.

\bibitem{ckm1} M. Coppens, C. Keem and G. Martens, The primitive length of a general $k$-gonal curve, Indag. Mathem., N.S. 5 (1994), no. 2, 145--159.

\bibitem{ckm2} M. Coppens, C. Keem and G. Martens,  Space models of degree $g+1$ curves of genus $g$, Manuscripta Math. 140 (2013), 237--247.


\bibitem{cm} M. Coppens and G. Martens, Linear series on 4-gonal curves,
Math. Nachr. 213 (2000), 35--55. 


\bibitem{fhl} W. Fulton, J. Harris and R. Lazarsfeld, Excess linear series on an algebraic curve, Proc. Amer. Math. Soc. 92 (1984), 320--322.

\bibitem{h} R. Horiuchi, Gap orders of meromorphic functions on Riemann surfaces, J. Reine Angew. Math. 336 (1982), 213--220.

\bibitem{k} E. Kani, On Castelnuovo's equivalence defect, J. Reine Angew. Math. 352 (1984), 24--70.


\bibitem{m1} G. Martens, A remark on very ample linear series. II, Arch. Math. Arch. Math. 99 (2012), 111--124.

\bibitem{mp} G. Martens and S. Park, A remark on very ample linear series, Arch. Math. 80 (2003),
611--614.

\bibitem{mat} J. P. Matelski, On geometry of algebraic curves, Ph. D. Thesis, Princeton, 1978.


\bibitem{m} H. Martens, On the varieties of special divisors on a curve, J. Reine Angew. Math. 233 (1967),
111--120.

\bibitem{v} C. Voisin, Courbes t\'{e}tragonal et cohomologie de Koszul, J. Reine Angew. Math. 387 (1988), 111--123.

\end{thebibliography}
\end{document}